\documentclass[12pt]{amsart}
\usepackage{amsmath,amssymb,amsbsy,amsfonts,latexsym,amsopn,amstext,cite,
                                               amsxtra,euscript,amscd,bm,mathabx}
\usepackage{url}
\usepackage[colorlinks,linkcolor=blue,anchorcolor=blue,citecolor=blue,backref=page]{hyperref}
\usepackage{color}
\usepackage{graphics,epsfig}
\usepackage{graphicx}
\usepackage{float} 
\usepackage[english]{babel}
\usepackage{mathtools}
\usepackage{todonotes}
\usepackage{url}
\usepackage[colorlinks,linkcolor=blue,anchorcolor=blue,citecolor=blue,backref=page]{hyperref}
\DeclareMathAlphabet{\mathmybb}{U}{bbold}{m}{n}

\usepackage[norefs,nocites]{refcheck}

\hypersetup{breaklinks=true}

\usepackage[norefs,nocites]{refcheck}
\usepackage[english]{babel}
\begin{document}

\newtheorem{thm}{Theorem}
\newtheorem{lem}[thm]{Lemma}
\newtheorem{claim}[thm]{Claim}
\newtheorem{cor}[thm]{Corollary}
\newtheorem{prop}[thm]{Proposition} 
\newtheorem{definition}[thm]{Definition}
\newtheorem{rem}[thm]{Remark} 
\newtheorem{question}[thm]{Open Question}
\newtheorem{conj}[thm]{Conjecture}
\newtheorem{prob}{Problem}
\newtheorem{Process}[thm]{Process}
\newtheorem{Computation}[thm]{Computation}
\newtheorem{Fact}[thm]{Fact}
\newtheorem{Observation}[thm]{Observation}

\newtheorem{lemma}[thm]{Lemma}

\newcommand{\GL}{\operatorname{GL}}
\newcommand{\SL}{\operatorname{SL}}
\newcommand{\lcm}{\operatorname{lcm}}
\newcommand{\ord}{\operatorname{ord}}
\newcommand{\Op}{\operatorname{Op}}
\newcommand{\Tr}{\operatorname{Tr}}
\newcommand{\Nm}{\operatorname{Nm}}

\numberwithin{equation}{section}
\numberwithin{thm}{section}
\numberwithin{table}{section}

\numberwithin{figure}{section}

\def\sssum{\mathop{\sum\!\sum\!\sum}}
\def\ssum{\mathop{\sum\ldots \sum}}
\def\iint{\mathop{\int\ldots \int}}

\def\wt {\mathrm{wt}}
\def\Tr {\mathrm{Tr}}

\def\SrA{\cS_r\(\cA\)}

\def\vol {{\mathrm{vol\,}}}
\def\squareforqed{\hbox{\rlap{$\sqcap$}$\sqcup$}}
\def\qed{\ifmmode\squareforqed\else{\unskip\nobreak\hfil
\penalty50\hskip1em\null\nobreak\hfil\squareforqed
\parfillskip=0pt\finalhyphendemerits=0\endgraf}\fi}

\def \ss{\mathsf{s}} 

\def \balpha{\bm{\alpha}}
\def \bbeta{\bm{\beta}}
\def \bgamma{\bm{\gamma}}
\def \blambda{\bm{\lambda}}
\def \bchi{\bm{\chi}}
\def \bphi{\bm{\varphi}}
\def \bpsi{\bm{\psi}}
\def \bomega{\bm{\omega}}
\def \btheta{\bm{\vartheta}}

\newcommand{\bfxi}{{\boldsymbol{\xi}}}
\newcommand{\bfrho}{{\boldsymbol{\rho}}}

 \def \xbar{\overline x}
  \def \ybar{\overline y}

\def\cA{{\mathcal A}}
\def\cB{{\mathcal B}}
\def\cC{{\mathcal C}}
\def\cD{{\mathcal D}}
\def\cE{{\mathcal E}}
\def\cF{{\mathcal F}}
\def\cG{{\mathcal G}}
\def\cH{{\mathcal H}}
\def\cI{{\mathcal I}}
\def\cJ{{\mathcal J}}
\def\cK{{\mathcal K}}
\def\cL{{\mathcal L}}
\def\cM{{\mathcal M}}
\def\cN{{\mathcal N}}
\def\cO{{\mathcal O}}
\def\cP{{\mathcal P}}
\def\cQ{{\mathcal Q}}
\def\cR{{\mathcal R}}
\def\cS{{\mathcal S}}
\def\cT{{\mathcal T}}
\def\cU{{\mathcal U}}
\def\cV{{\mathcal V}}
\def\cW{{\mathcal W}}
\def\cX{{\mathcal X}}
\def\cY{{\mathcal Y}}
\def\cZ{{\mathcal Z}}
\def\Ker{{\mathrm{Ker}}}

\def\NmQR{N(m;Q,R)}
\def\VmQR{\cV(m;Q,R)}

\def\Xm{\cX_{p,m}}

\def \A {{\mathbb A}}
\def \B {{\mathbb A}}
\def \C {{\mathbb C}}
\def \F {{\mathbb F}}
\def \G {{\mathbb G}}
\def \L {{\mathbb L}}
\def \K {{\mathbb K}}
\def \PP {{\mathbb P}}
\def \Q {{\mathbb Q}}
\def \R {{\mathbb R}}
\def \Z {{\mathbb Z}}
\def \fS{\mathfrak S}
\def \fB{\mathfrak B}

\def\Fq{\F_q}
\def\Fqr{\F_{q^r}} 
\def\ovFq{\overline{\F_q}}
\def\ovFp{\overline{\F_p}}
\def\GL{\operatorname{GL}}
\def\SL{\operatorname{SL}}
\def\PGL{\operatorname{PGL}}
\def\PSL{\operatorname{PSL}}
\def\li{\operatorname{li}}
\def\sym{\operatorname{sym}}

\def\Mob{M{\"o}bius }

\def\fF{\EuScript{F}}
\def\M{\mathsf {M}}
\def\T{\mathsf {T}}

\def\e{{\mathbf{\,e}}}
\def\ep{{\mathbf{\,e}}_p}
\def\eq{{\mathbf{\,e}}_q}

\def\\{\cr}
\def\({\left(}
\def\){\right)}

\def\<{\left(\!\!\left(}
\def\>{\right)\!\!\right)}
\def\fl#1{\left\lfloor#1\right\rfloor}
\def\rf#1{\left\lceil#1\right\rceil}

\def\Tr{{\mathrm{Tr}}}
\def\Nm{{\mathrm{Nm}}}
\def\Im{{\mathrm{Im}}}

\def \oF {\overline \F}

\newcommand{\pfrac}[2]{{\left(\frac{#1}{#2}\right)}}

\def \Prob{{\mathrm {}}}
\def\e{\mathbf{e}}
\def\ep{{\mathbf{\,e}}_p}
\def\epp{{\mathbf{\,e}}_{p^2}}
\def\em{{\mathbf{\,e}}_m}

\def\Res{\mathrm{Res}}
\def\Orb{\mathrm{Orb}}

\def\vec#1{\mathbf{#1}}
\def \va{\vec{a}}
\def \vb{\vec{b}}
\def \vh{\vec{h}}
\def \vk{\vec{k}}
\def \vs{\vec{s}}
\def \vu{\vec{u}}
\def \vv{\vec{v}}
\def \vz{\vec{z}}
\def\flp#1{{\left\langle#1\right\rangle}_p}
\def\T {\mathsf {T}}

\def\sfG {\mathsf {G}}
\def\sfK {\mathsf {K}}

\def\mand{\qquad\mbox{and}\qquad}

\title[Character sums over   finite fields]
{Character sums over elements of extensions of finite fields with restricted coordinates}

\author[S. Iyer] {Siddharth Iyer}
\address{School of Mathematics and Statistics, University of New South Wales, Sydney, NSW 2052, Australia}
\email{siddharth.iyer@unsw.edu.au}

\author[I. E. Shparlinski] {Igor E. Shparlinski}
\address{School of Mathematics and Statistics, University of New South Wales, Sydney, NSW 2052, Australia}
\email{igor.shparlinski@unsw.edu.au}

\begin{abstract}  We obtain nontrivial bounds for character sums with multiplicative 
and additive characters over finite fields over elements with restricted coordinate expansion.
In particular, we obtain a nontrivial estimate for such a sum over a finite field analogue of the Cantor set. \end{abstract}

\keywords{Character sums, elements with restricted coordinates, Cantor sets in finite fields}
\subjclass[2010]{11L40, 11T30}

\maketitle

\tableofcontents

\section{Introduction} 
Let $\(\vartheta_1, \ldots, \vartheta_r\)$ be a basis of the finite field
$$
\Fqr =\left \{a_1\vartheta_1+ \ldots +a_r \vartheta_r:~a_1, \ldots, a_r \in \Fq\right\}
$$
of $q^r$ elements over the finite field $\Fq$ of $q$ elements. 

Motivated by a series of recent outstanding results on integers with restricted digital expansion 
in a given basis, there has also been very significant progress in studying elements $\omega \in \Fqr$
with various restrictions on their coordinates $(a_1, \ldots, a_r)$ in the expansion 
$$
\omega = a_1\vartheta_1+ \ldots +a_r \vartheta_r\in \Fqr, 
$$ 
we refer to~\cite{MSW} for a brief outline of such results (both settings on integers and finite fields), 
some new results and further references, in particular on bounds of various character sums over such 
field elements. 

Here, given a set $\cA\subseteq \F_q$, we consider the set 
\begin{equation}\label{eq:Set SrA}
\SrA = \left \{a_1\vartheta_1+ \ldots +a_r \vartheta_r:~a_1, \ldots, a_r \in \cA\right\}
\end{equation}
that is the set of $u \in \Fqr$ whose coordinates are restricted to the set $\cA$.

In particular, one of the natural examples is the case of $q=3$ and $\cA =\{0, 2\}$ 
which leads to a Cantor-like set $\SrA\subseteq \F_{3^r}$. 
 
The main goal of this paper is to estimate mixed character sums 
$$
S_{r}(\cA;\chi,\psi;f_1,f_2) = \sum_{\omega \in \SrA}\chi\(f_1(\omega)\)\psi\(f_2(\omega)\), 
$$
with rational functions $f_1(X), f_2(X) \in \Fqr(X)$, of degrees $d_1$ and $d_2$, respectively, and 
where $\chi$ and $\psi$ are a fixed multiplicative and 
 additive character of $\F_{q^r}$, respectively (with the natural conventions that the poles of $f_1(X)$ and $f_2(X)$ are excluded from summation).
 
 We are especially interested in the case when $\cA$ is of cardinality $\#\cA$ relatively small 
 compared to $q$. In particular, we  are interested in obtaining nontrivial bounds in the case of small
 values of the parameter 
$$
 \rho = \frac{\log \#\cA}{\log q}.
$$

It is well known that such bounds can be used  to study, for example, 
 the distribution of primitive elements in the values of polynomials on elements from 
 $\SrA$ or their pseudorandom properties. Since these applications are quite standard, we do not
 present them here.

\section{Notation and conventions}

Throughout the paper,  we fix the size $q$ of the ground field, and thus also its characteristic  $p$ 
while the parameter $r$ is allowed to grow. 

We also fix an additive character $\psi$ and a multiplicative character~$\chi$ of~$\Fqr$
which are not both principal.

As usual, we use $\ovFq$ to denote the algebraic closure of $\F_q$. It is useful to recall that 
 $\ovFq \subseteq  \ovFp$. 

For a finite set $\cS$, we use $\# \cS$ to denote its cardinality.

We denote by $\log_2 x$ the binary logarithm of $x>0$.

We adopt the Vinogradov symbol $\ll$,  that is, for any quantities $A$ and $B$ we
have the following equivalent definitions:
$$A\ll B~\Longleftrightarrow~A=O(B)~\Longleftrightarrow~|A|\le c B$$
for some  constant $c>0$, which throughout the paper is allowed to depend on the 
degrees $d, d_1,d_2$ and the ground field size~$q$ 
(but not on the main parameter $r$) and the integer parameter $s\ge 1$.

 We also adopt the $o$-notation
$$A=o(B)~\Longleftrightarrow ~|A|\le \varepsilon B$$
for any fixed $\varepsilon>0$ and sufficiently large (depending on $d$, $d_1$, $d_2$, $q$, $s$ 
and $\varepsilon$) values of the parameter $r$

For a rational function $g(X) \in \ovFp(X)$ and an element $w \in\ovFp$ we define $\ord_{w} g$ to be the unique integer so that $(X-w)^{\ord_{w}(g)}g$ extends to a rational function which has no zero or pole at $w$.

We also write 
$$\ep(z)=\exp(2\pi iz/p).
$$
Finally, we also recall our convention that the poles of functions in the arguments of multiplicative and 
additive characters are always excluded from summation.

\section{Main results}

We define the following sets of rational functions.

\begin{definition}  
\label{def: set Q and R} For   integers $d\ge 0$ and $n \ge 2$, 
\begin{itemize}
\item let $\cQ_{d,n}$ be the set of rational functions $g(X)\in\F_{q^r}(X)$ 
of degree at most $d$, which are not an $n$-th power of some rational function in $ \ovFp(X)$.

\item  let $\cR_{d}$ be the set of rational functions $f(X)\in\F_{q^r}(X)$ of degree at most $d$, 
which have at least one pole of order that is not a multiple of $p$.
\end{itemize}
\end{definition}  

We note that we allow $d=0$ in Definition~\ref{def: set Q and R}, that is non-zero constant functions, 
in which case $\cQ_{d,n} = \cR_{d} = \emptyset$. 

We are now ready to present our main result. We recall our convention that implied 
constants are allowed to depend on the integer parameters $d_1$, $d_2$, $q$ and $s$.

For an integer $s \ge 1$ we define
\begin{equation}\label{eq:kappa-s}
\kappa_s(\rho) = \frac{s\rho(2\rho-1)+\rho -1}{4s(s\rho +1)}.
\end{equation}

\begin{thm}\label{thm:gen bound}
Let $\chi$ and $\psi$ be a multiplicative and additive character, respectively, and 
let $f_1(X) , f_2(X)\in \F_{q^r}(X)$.  Assume that at least one of the following conditions holds
\begin{itemize}
\item[(i)] $\chi$ is nonprincipal of order $n$ and $f_1(X) \in \cQ_{d,n}$, 
\item[(ii)] $\psi$ is nonprincipal and $f_2(X) \in\cR_{d}$. 
\end{itemize}
Then  for any fixed integer $s\ge 1$, we have 
$$
S_{r}(\cA;\chi,\psi;f_1,f_2)  \ll \(\# \cA\)^r q^{- r \kappa_s(\rho) } . 
$$  
\end{thm}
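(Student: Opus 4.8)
The plan is to combine a Hölder amplification with an exact (lossless) completion over a coordinate subspace, reducing matters to the classical Weil bound for complete mixed character sums over $\Fqr$. Write $r=m+\ell$ and split each $\omega\in\SrA$ as $\omega=u+v$ with $u\in\cS_m\(\cA\)=\{a_1\vartheta_1+\ldots+a_m\vartheta_m\}$ and $v\in\cS_\ell\(\cA\)=\{a_{m+1}\vartheta_{m+1}+\ldots+a_r\vartheta_r\}$, the optimal split turning out to be $\ell\approx r/\(2(s\rho+1)\)$. Applying Hölder with exponent $2s$ in the outer variable $u$ and then enlarging the range of $u$ from $\cS_m\(\cA\)$ to the full $\Fq$-subspace $\cS_m\(\Fq\)$ (permissible since the summand is now nonnegative) gives
$$|S_r|^{2s}\le (\#\cA)^{m(2s-1)}\sum_{\vec v}\ \sum_{u\in\cS_m(\Fq)}\chi\(F_1(u)\)\psi\(F_2(u)\),$$
where $\vec v=(v_1,\ldots,v_s,v_1',\ldots,v_s')$ runs over $\cS_\ell\(\cA\)^{2s}$ and
$$F_1=\prod_{j=1}^s\frac{f_1(X+v_j)}{f_1(X+v_j')},\qquad F_2=\sum_{j=1}^s\bigl(f_2(X+v_j)-f_2(X+v_j')\bigr).$$

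Next I would evaluate the sum over $\cS_m\(\Fq\)$ by means of its annihilator: writing the indicator of $\cS_m\(\Fq\)$ as $q^{-\ell}\sum_{\mu}\psi_0\(\Tr(\mu\,\cdot)\)$, where $\mu$ ranges over the $q^\ell$ elements killing $\vartheta_1,\ldots,\vartheta_m$ and $\psi_0$ is a fixed additive character of $\Fq$, each inner sum becomes a complete sum $\sum_{\omega\in\Fqr}\chi\(F_1(\omega)\)\psi_0\(\Tr(\nu F_2(\omega)+\mu\omega)\)$ over $\Fqr$, with $\nu$ chosen so that $\psi(\cdot)=\psi_0\(\Tr(\nu\,\cdot)\)$. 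This completion is exact, so no $L^1$-loss is incurred. For the tuples $\vec v$ that are non-degenerate — meaning, under hypothesis (i), that $F_1$ is not an $n$-th power in $\ovFp(X)$, and under hypothesis (ii) that $F_2$ still has a pole of order prime to $p$ — the Weil bound for mixed sums yields cancellation of size $\ll q^{r/2}$, uniformly in the additive twist $\mu$, so averaging over the $q^\ell$ values of $\mu$ preserves it. For the degenerate tuples I would simply use the trivial bound $q^m$ for the sum over $\cS_m\(\Fq\)$.

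The two remaining inputs are a combinatorial count and an optimization. The count I expect to need is that the number of degenerate $\vec v$ is $\ll(\#\cA)^{s\ell}$: since $f_1$ (resp. $f_2$) has bounded degree, $F_1$ can be an $n$-th power (resp. the poles of $F_2$ can all cancel) only when the multisets $\{v_j\}$ and $\{v_j'\}$ essentially coincide, leaving $\ll s!\,(\#\cA)^{s\ell}$ choices. Feeding the two bounds — $q^{r/2}$ for each of the $\le(\#\cA)^{2s\ell}$ non-degenerate tuples, and $q^m$ for each of the $\ll(\#\cA)^{s\ell}$ degenerate ones — into the displayed inequality and taking $2s$-th roots, the two resulting contributions to the exponent of $q$ are balanced precisely by the choice $\ell=r/\(2(s\rho+1)\)$; a direct computation then shows that the saving over the trivial bound $(\#\cA)^r$ is exactly $q^{r\kappa_s(\rho)}$. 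Rounding $\ell$ to an integer only affects the implied constant, since $q$ is fixed.

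The main obstacle will be the Weil step together with the degeneracy bookkeeping: one must verify that hypotheses (i)/(ii) on $f_1,f_2$ genuinely pass to the amplified functions $F_1,F_2$ for non-degenerate $\vec v$ — that forming products (resp. differences) of shifts does not accidentally create an $n$-th power (resp. cancel all pole orders prime to $p$) — and that the additive twist by $\mu\omega$ never destroys the multiplicative (resp. additive) nontriviality driving the estimate. Establishing the sharp degenerate count $\ll(\#\cA)^{s\ell}$, rather than a weaker power, is exactly what makes the optimization land on the stated $\kappa_s(\rho)$.
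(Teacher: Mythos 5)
Your proposal is correct and follows essentially the same route as the paper: the same coordinate splitting, H\"older amplification in the outer variable with extension to the full subspace, reduction of the inner subspace sum to complete sums via the annihilator (this is exactly the content of Lemma~\ref{lem:weil-subspace}, quoted from~\cite{MSW}), the same $\ll V^s$ count of degenerate tuples (Lemmas~\ref{lem:rat prod} and~\ref{lem:rat lincomb}), and the same balancing, your $\ell = r/(2(s\rho+1))$ being precisely the paper's $r(1-\tau_0)$. The only points left as sketches --- the pairing argument behind the degenerate count and the robustness of the nondegeneracy conditions under the additive twist $\mu\omega$ (handled in the paper by the $\beta X$ term in the definition of $\cE$) --- are exactly the ones the paper's lemmas make precise, and your outline of them is accurate.
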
 

Clearly for any $\rho > 1/2$ we have $\kappa_s(\rho)  >0$ for a sufficiently large $s$.

In particular, with 
$$
\rho =\frac{\log 2}{\log 3}
$$ 
taking $s=5$ in Theorem~\ref{thm:gen bound}
we have the following nontrivial bound for a ``Cantor-like'' set in finite fields. 

\begin{cor}
\label{cor:Cantor} Let $q = 3$ and $\cA  = \{0,2\}$. Under the conditions of Theorem~\ref{thm:gen bound} 
we have 
$$
S_{r}(\cA;\chi,\psi;f_1,f_2)  \ll 2^{\gamma r}
$$
where 
$$
\gamma = 1 - \frac{\log 3}{\log 2}\cdot \kappa_{5}\(\frac{\log 2}{\log 3}\)
= 0.99128\ldots\,.
$$
\end{cor}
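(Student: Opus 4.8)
The plan is to treat this corollary purely as a specialization of Theorem~\ref{thm:gen bound}: the only content is a substitution of parameters followed by a change of logarithmic base, so no new ideas are required. First I would set $q=3$ and $\cA=\{0,2\}$, so that $\#\cA=2$ and hence
$$
\rho=\frac{\log\#\cA}{\log q}=\frac{\log 2}{\log 3}.
$$
Since the hypotheses of Theorem~\ref{thm:gen bound} are assumed to hold verbatim (one of the conditions (i) or (ii) on the pair $(\chi,f_1)$ or $(\psi,f_2)$), there is nothing to verify on the character/rational-function side. I would simply invoke the theorem with the choice $s=5$ to obtain
$$
S_{r}(\cA;\chi,\psi;f_1,f_2)\ll (\#\cA)^r\, q^{-r\kappa_5(\rho)}=2^{r}\,3^{-r\kappa_5(\rho)}.
$$

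Next I would convert this into a single power of $2$. Using $3=2^{\log 3/\log 2}$, I rewrite $3^{-r\kappa_5(\rho)}=2^{-r(\log 3/\log 2)\kappa_5(\rho)}$, whence
$$
2^{r}\,3^{-r\kappa_5(\rho)}=2^{r\left(1-\frac{\log 3}{\log 2}\kappa_5(\rho)\right)}=2^{\gamma r},
$$
with $\gamma$ exactly as in the statement. This identifies the exponent and reduces everything to a numerical evaluation of $\gamma$.

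Finally, the only genuine computation is to confirm that $\gamma=0.99128\ldots<1$, which I would carry out by substituting $\rho=\log 2/\log 3\approx 0.6309$ and $s=5$ into the defining formula~\eqref{eq:kappa-s}; this yields a small positive value $\kappa_5(\rho)\approx 5.5\times 10^{-3}$, and multiplying by $\log 3/\log 2\approx 1.585$ gives $\gamma\approx 0.9913$, so the bound is indeed nontrivial. The main — and in fact only — obstacle is purely arithmetic: since the margin $1-\gamma$ is of order $10^{-2}$, one must evaluate $\kappa_5(\rho)$ with enough precision to be certain that $\gamma$ falls strictly below $1$. Beyond this numerical check, nothing is needed apart from Theorem~\ref{thm:gen bound} itself.
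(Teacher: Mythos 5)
Your proposal is correct and is exactly the paper's argument: the corollary is stated there as an immediate specialization of Theorem~\ref{thm:gen bound} with $s=5$ and $\rho=\log 2/\log 3$, followed by the same base-change rewriting $2^r 3^{-r\kappa_5(\rho)} = 2^{\gamma r}$ and the numerical evaluation $\kappa_5(\rho)\approx 5.5\times 10^{-3}$, $\gamma = 0.99128\ldots$, which your computation confirms.
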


We remark that both Theorem~\ref{thm:gen bound} and Corollary~\ref{cor:Cantor}
apply to {\it Kloosterman sums\/} 
$$
 \sum_{\omega \in \SrA} \psi\(a\omega + b \omega^{-1} \), \qquad (a,b) \in  \F_{q^r} \times \F_{q^r}^*, 
$$
over elements of $\SrA$. 

 We note that unfortunately  Theorem~\ref{thm:gen bound} does not apply to polynomials $ f_2$ 
 if either  $\chi$ is principal or $f_1 \not \in \cQ_{d,n}$. 
Hence, we introduce another class of functions which actually originates from~\cite{MSW}.  

\begin{definition}  
 Let $\cP_{d}$ be the set of rational functions $f(X)\in\F_{q^r}(X)$ of 
degree $d$ such that 
for any  $\omega\in \F_{q^r}^*$
the function  
$$
f_{\omega}(X)=f(X+\omega)-f(X)
$$
is not of the form
$$
f_{\omega}(X)=\alpha\(g(X)^p-g(X)\)+\beta X 
$$
for some rational function $g(X)\in \ovFq(X)$  and  $\alpha,\beta\in \ovFq$.
\end{definition}    

We refer to~\cite{MSW} for examples of functions from  $\cP_{d}$. 

For a function $f_2 \in \cP_d$, we are only able to obtain  
a version of Theorem~\ref{thm:gen bound} with $s=1$, and hence we save only 
$$
\kappa_1(\rho) = \frac{2\rho^2-1}{4(\rho +1)}.
$$

\begin{thm}\label{thm:poly bound}
Let $\chi$ and $\psi$ be a multiplicative and additive character, respectively, and 
let $f_1(X) , f_2(X)\in \F_{q^r}(X)$.  Assume that  $\psi$ is nonprincipal and $f_2(X) \in\cP_{d}$. 
Then, we have 
$$
S_{r}(\cA;\chi,\psi;f_1,f_2)  \ll  \(\# \cA\)^r  q^{- r \kappa_1(\rho) } . 
$$  
\end{thm}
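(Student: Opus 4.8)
The plan is to prove Theorem~\ref{thm:poly bound} by mimicking the structure used to establish Theorem~\ref{thm:gen bound}, but restricting attention to the case $s=1$. Since the sum $S_r(\cA;\chi,\psi;f_1,f_2)$ runs over $\omega = a_1\vartheta_1 + \ldots + a_r\vartheta_r$ with each $a_i \in \cA$, I would first split the coordinate vector into two blocks, writing $\omega = u + v$ where $u$ ranges over a subset of the coordinates and $v$ over the complementary block. The idea is to fix one block and exploit cancellation in the character sum over the other, so the natural first step is to apply the Cauchy--Schwarz (or Hölder) inequality to separate the ``hard'' variable from the ``easy'' one, raising the inner sum to the power $2s = 2$ and bounding the number of variables trivially. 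This produces a sum whose inner structure is governed by the difference $f_2(X+\omega) - f_2(X)$, which is precisely why the class $\cP_d$ is defined in terms of the shifted function $f_{\omega}(X) = f(X+\omega) - f(X)$.

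Second, after the Cauchy--Schwarz step, I would arrive at a sum of additive character values $\psi(f_{2,\omega}(X))$ over a shifted copy of $\cS_{r'}(\cA)$ for some smaller $r'$, where $\omega$ is a fixed nonzero shift. The definition of $\cP_d$ guarantees that $f_{2,\omega}(X)$ is never of the degenerate form $\alpha(g(X)^p - g(X)) + \beta X$, which is exactly the condition under which an additive character sum of Weil type fails to exhibit square-root cancellation. The crucial point is therefore to invoke a Weil-type bound for the completed additive character sum $\sum_{x \in \F_{q^r}} \psi(f_{2,\omega}(x))$, obtaining the square-root saving $q^{r/2}$ from the fact that $f_{2,\omega} \notin \{\alpha(g^p-g)+\beta X\}$, and then to descend from the complete sum to the incomplete sum over $\cS_{r'}(\cA)$ via a standard completion argument using additive characters of $\F_{q^r}$.

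The main obstacle is the completion step and the bookkeeping of the exponent $\kappa_1(\rho)$. One must carefully balance the trivial bound $(\#\cA)^r$ against the Weil saving $q^{r/2}$ spread across the split blocks, and the algebraic optimization of the block sizes is what produces the precise shape of $\kappa_1(\rho) = (2\rho^2-1)/(4(\rho+1))$. Because $\cP_d$ only controls the \emph{first} difference $f_{2,\omega}$ (not higher iterated differences, as would be needed for $s \ge 2$), I cannot iterate the Cauchy--Schwarz--Weil argument and must stop after a single application; this is the structural reason the theorem is limited to $s=1$. Concretely, I expect the delicate part to be verifying that the degeneracy condition in the definition of $\cP_d$ matches exactly the hypothesis of the relevant Weil bound after completion, and tracking how the $\beta X$ term (absorbing the contribution of the completing additive character twist) interacts with the shift $\omega$.

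I would then optimize over the sizes of the two coordinate blocks to obtain the stated exponent, and conclude that
$$
S_{r}(\cA;\chi,\psi;f_1,f_2)  \ll  \(\# \cA\)^r  q^{- r \kappa_1(\rho) },
$$
as required.
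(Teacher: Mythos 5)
Your skeleton --- the two-block coordinate decomposition, Cauchy--Schwarz in place of H\"older, the observation that $\cP_d$ makes every first difference $f_{2,\omega}$ non-degenerate, and the explanation of why the method is stuck at $s=1$ --- all match the paper's argument. The genuine gap is in your second step. After Cauchy--Schwarz you keep the inner summation over (a shift of) the sparse set $\cS_t(\cA)$ spanned by the first block, and you propose to estimate it by completing to $\F_{q^r}$. For digitally restricted sets that completion is exponentially lossy: expanding the indicator function of $\cS_t(\cA)$ in additive characters of $\F_{q^r}$, the resulting bound is $q^{r/2}$ times the Fourier mass $q^{-r}\sum_{y\in\F_{q^r}}|\widehat{\1}(y)| = \bigl(q^{-1}\sum_{b\in\F_q}\bigl|\sum_{a\in\cA}\ep(\Tr(ab))\bigr|\bigr)^{t}$, and the quantity in parentheses is always $\ge 1$ (since $\|\widehat{\1}\|_1\ge\|\widehat{\1}\|_2^2/\|\widehat{\1}\|_\infty$), with equality only for additively structured $\cA$; for instance, for $q=3$, $\cA=\{0,2\}$ it equals $4/3$. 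Writing this loss as $q^{\sigma t}$, your off-diagonal contribution becomes $UV^2q^{r/2+\sigma t}$, and nontriviality forces $t(\rho-\sigma)>r/2$ with $t<r$, i.e.\ $\rho>1/2+\sigma$. For a typical $\cA$ one has $\sigma$ close to $\rho/2$ (the Cauchy--Schwarz bound $\sum_b|\widehat{\1}_\cA(b)|\le q\sqrt{\#\cA}$ is then essentially sharp), so the requirement becomes $\rho>1$ and the method yields nothing; in no case does it produce the exponent $\kappa_1(\rho)$ uniformly in $\cA$, which is what Theorem~\ref{thm:poly bound} asserts (e.g.\ for generic $\cA$ with $\rho=0.8$ your bound is trivial, while $\kappa_1(0.8)>0$).

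The missing idea is the maneuver that makes this step lossless. Immediately after Cauchy--Schwarz, while the summands $|\cdots|^2$ are still nonnegative, enlarge the sparse outer set $\cU=\{a_1\vartheta_1+\cdots+a_t\vartheta_t:~a_i\in\cA\}$ to the full $\F_q$-linear space $\cL$ it spans, using $\sum_{u\in\cU}|\cdots|^2\le\sum_{\lambda\in\cL}|\cdots|^2$, and only then expand the square and interchange summations. The inner sums then run over an affine subspace, and Lemma~\ref{lem:weil-subspace} (Castro--Moreno plus orthogonality of additive characters; completion over a subspace costs nothing, since the indicator of $\cL$ is an exact average of $q^r/\#\cL$ additive characters) gives $q^{r/2}$ with no $\cA$-dependent loss --- the $\beta X$ term in the definitions of $\cE$ and $\cP_d$ absorbs precisely the linear twists arising there, as you anticipated. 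The price is paid only on the diagonal $v_1=v_2$, where the trivial bound is now $q^t=\#\cL$ rather than $\#\cU$; the optimisation $t=\fl{\tau r}$ with $\tau_0=(2\rho+1)/(2(\rho+1))$ accommodates exactly this, giving $S^2\ll U\(q^tV+q^{r/2}V^2\)$ and hence $\kappa_1(\rho)$. One further small inaccuracy: Theorem~\ref{thm:poly bound} puts no hypothesis on $\chi$ and $f_1$, so after Cauchy--Schwarz the inner sum is a mixed sum still carrying the factor $\chi\(f_1(\cdot+v_1)/f_1(\cdot+v_2)\)$, not a pure $\psi$-sum; this is harmless because condition (ii) of Lemma~\ref{lem:weil-subspace} is uniform in the multiplicative part, but your reduction needs to say so.
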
 

Note that $\kappa_1(\rho) >0$ only for 
$$
\rho > 2^{-1/2} = 0.707106\ldots > \frac{\log 2}{\log 3} =0.63092\ldots , 
$$ 
and hence unfortunately  Theorem~\ref{thm:poly bound} does not apply to the setting of Corollary~\ref{cor:Cantor}.

\section{Ratios and linear combinations of shifts of rational functions}

Various versions of the following results have been well-known, see, for example, 
the proof of~\cite[Theorem~1] {BaCoSh} or of~\cite[Theorem~1]{OstShp}.

It is convenient to introduce the following notation. Given a vector 
$\vv= \(v_{1},\ldots ,v_{2s}\)\in \ovFp^{2s}$ and a rational function $f \in \ovFp(X)$, we
set 
 \begin{equation}\label{eq:Prod f}
P_{\vv,f}(X)= \prod_{i=1}^{s}\frac{f\(X+v_{i}\)}{f\(X+v_{s+i}\)}.
\end{equation}

The implied constants in this section may depend only  on $d = \deg f$  and $s$, 
but are uniform with respect to other parameters, including $q$, and most 
importantly $n$, $r$ and $V$.

\begin{lem}
\label{lem:rat prod}
 Let   $f(X) \in \cQ_{d,n}$  for some integers $d \ge 1$ and $n \geq 2$. 
 For any set $\cV \subseteq \ovFp$ of cardinality $V$, for each integer 
 $s \geq 1$ we have
$$
\# \left\{\vv = \(v_{1},\ldots ,v_{2s}\)\in \cV^{2s}:~P_{\vv,f}(X) 
\not \in \cQ_{2ds,n}\right\} \ll  V^s. 
$$  
\end{lem}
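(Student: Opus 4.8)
The plan is to reduce the statement to a purely combinatorial count about coincidences among the translated zeros and poles of $f$, and then to extract a square-root saving from a simple graph-theoretic argument.

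First I would observe that, since each factor $f(X+v_i)$ has degree at most $d$ and degree is subadditive under products and inversion, $P_{\vv,f}$ always has degree at most $2ds$. Hence $P_{\vv,f}\notin\cQ_{2ds,n}$ holds if and only if $P_{\vv,f}$ is an $n$-th power in $\ovFp(X)$. Working over the algebraically closed field $\ovFp$, in which every element is an $n$-th power, this is equivalent to $\ord_w P_{\vv,f}\equiv 0\pmod n$ for every $w\in\ovFp$. Writing the divisor of $f$ as supported on the distinct zeros and poles $\rho_1,\dots,\rho_k$, so that $k\le 2d$, with nonzero multiplicities $e_1,\dots,e_k$, and using $\ord_w f(X+v)=\ord_{w+v}f$, the $n$-th power condition becomes
$$
\sum_{i=1}^{s}\ \sum_{j:\ \rho_j-v_i=w} e_j \ \equiv\ \sum_{i=1}^{s}\ \sum_{j:\ \rho_j-v_{s+i}=w} e_j \pmod n
$$
for every $w\in\ovFp$. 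Since $f\in\cQ_{d,n}$ is not an $n$-th power, the set $J=\{j:\ e_j\not\equiv0\ (\mathrm{mod}\ n)\}$ is nonempty.

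The key mechanism is that no translated ``bad'' point can stand alone. Fix any index $i_0\in\{1,\dots,2s\}$ and any $j_0\in J$, and consider the point $w_0=\rho_{j_0}-v_{i_0}$. Its contribution to $\ord_{w_0}P_{\vv,f}$ coming from the single pair $(i_0,j_0)$ is $\pm e_{j_0}\not\equiv0\pmod n$, so the $n$-th power condition forces a second pair $(i,j)\neq(i_0,j_0)$ with $\rho_j-v_i=w_0$. A direct check gives $i\neq i_0$, and therefore $v_i-v_{i_0}=\rho_j-\rho_{j_0}$ lies in the fixed difference set $\Delta=\{\rho_a-\rho_b:\ 1\le a,b\le k\}$, of size at most $k^2\le 4d^2$. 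Thus to each bad $\vv$ I associate a graph $G$ on the vertex set $\{1,\dots,2s\}$, joining $i_0$ to $i$ whenever $v_i-v_{i_0}\in\Delta$, and the argument shows that $G$ has no isolated vertex.

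Finally I would harvest the saving. A graph on $2s$ vertices with no isolated vertex has at most $s$ connected components, so a spanning forest of $G$ has at least $s$ edges and leaves at most $s$ representative vertices. Choosing one representative per component, each ranging over $\cV$, and then propagating along the labelled forest edges, the whole tuple $\vv$ is determined. Summing over the bounded number of combinatorial types, namely the graph $G$, a spanning forest of it, and the labels in $\Delta$ on its edges, for a total of at most $C(d,s)$ types, yields $\#\{\vv:\ P_{\vv,f}\notin\cQ_{2ds,n}\}\le C(d,s)\,V^{s}\ll V^{s}$. The main obstacle is precisely to organise this final count so that the implied constant depends only on $d$ and $s$ and is uniform in $n$, $q$, $r$ and $V$; the graph formulation achieves exactly this, since both the number of types and the per-type bound $V^{c}\le V^{s}$, where $c\le s$ is the number of components, avoid any reference to $n$ or the ground field, while cancellation among coincident points is handled simply by the observation that every vertex has degree at least one.
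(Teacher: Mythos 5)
Your proof is correct and takes essentially the same approach as the paper's: an isolated translated zero or pole with multiplicity $\not\equiv 0 \pmod n$ forces $P_{\vv,f}$ not to be an $n$-th power, so every bad tuple gives a graph on $\{1,\dots,2s\}$ with no isolated vertex, hence at most $s$ connected components and $\ll V^s$ tuples. Your use of the set $J$ in place of the paper's reduction to exponents in $\{\pm 1,\dots,\pm(n-1)\}$, and your explicit spanning-forest count (which makes rigorous the paper's brief "partition into at most $s$ groups" step), are only cosmetic refinements of the same argument.
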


\begin{proof} Without loss of generality, we can assume that all zeros and poles of $f$ 
are of order less than $n$, that is, 
$$
f(X) = \prod_{j=1}^{h}(X-\alpha_{j})^{u_{j}},
$$
where $\alpha_{j} \in \overline{\F_{p}}$ are pairwise distinct and $u_{j} \in \{\pm 1, \ldots, \pm (n-1)\}$, 
$j =1, \ldots, h$.

If $v_{1},\ldots ,v_{2s} \in \cV$ are chosen so that there exist some integers $k$ and $\ell$ with 
 $1 \leq k  \leq 2s$ and $1 \le \ell \le h$ so that 
$$
 v_{k}- \alpha_{\ell} \neq v_{i}- \alpha_{j}
$$
for all $(i,j) \neq (k, \ell)$ then for $\beta = \alpha_{\ell}-v_{k}$ we have
$$\ord_{\beta}\prod_{i=1}^{s}f\(X+v_{i}\)/f\(X+v_{s+i}\)
\equiv u_{\ell} \not \equiv 0 \pmod n$$
and thus, the above rational function is not an $n$-th  power.  

Let $E$ be the number of $\vv = \(v_{1},\ldots ,v_{2s} \)\in \cV^{2s}$ for which $P_{\vv,f}(X) \not \in \cQ_{2ds,n}$. 
Then for each choice of $1 \leq i\leq 2s$ there is some index $k\ne i$,  $1 \leq k\leq 2s$,  such that $v_i-v_k$ 
belongs to the difference set of the set $\{\alpha_1, \ldots, \alpha_h\}$ and thus can take at most $h(h-1) +1 \le d^2$ 
values. In particular, the components of $\vv$ can be partitioned into at most $s$ groups such that differences  of elements within each group belong to the above difference set. This immediately implies that $E\ll V^s$ and 
concludes the proof.  
\end{proof}

We use Lemma~\ref{lem:rat prod} to control sums of multiplicative characters. To control sums of 
additive characters we need its appropriate analogue for linear combinations instead of products 
as in~\eqref{eq:Prod f}. Namely, given a vector 
$\vv= \(v_{1},\ldots ,v_{2s}\)\in \ovFp^{2s}$ and a rational function $f \in \ovFp(X)$, we
set 
 \begin{equation}\label{eq:LinComb f}
L_{\vv,f}(X)= \sum_{i=1}^{s} \(f\(X+v_{i}\)- f\(X+v_{s+i}\)\).
\end{equation}

\begin{definition}
We define the set $\cE$ of {\it exceptional\/} rational functions as the set of 
 rational functions $f(X) \in \F_{p^r}(X)$  such that  there exists $\alpha , \beta \in \overline{\F_{p}}$ and $h(X) \in \overline{\F_{p}}(X)$ so that $f(X) = \alpha(h(X)^p-h(X)) + \beta X$.
\end{definition}

 Then we have the following additive analogue of  Lemma~\ref{lem:rat prod}.

\begin{lem}
\label{lem:rat lincomb}
 Let   $f(X) \in \cR_{d}$  for some integers $d \ge 1$. 
 For any set $\cV \subseteq \ovFp$ of cardinality $V$, for each integer 
 $s \geq 1$ we have
$$
\# \left\{\vv = \(v_{1},\ldots ,v_{2s}\)\in \cV^{2s}:~L_{\vv,f}(X) 
 \in \cE\right\} \ll V^s. 
$$  
\end{lem}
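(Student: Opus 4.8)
The plan is to run the additive analogue of the proof of Lemma~\ref{lem:rat prod}, with the role played there by ``being an $n$-th power'' now played by a divisibility condition on the orders of finite poles. The governing observation I would establish first is that every $g \in \cE$ has all of its \emph{finite} poles of order divisible by $p$. Indeed, writing $g(X) = \lambda(h(X)^p - h(X)) + \mu X$ with $\lambda,\mu \in \ovFp$ and $h \in \ovFp(X)$: if $w \in \ovFp$ is a finite pole of $h$ of order $a \ge 1$, then $h^p$ has a pole of order $pa$ at $w$ while $h$ has a pole of order $a < pa$, so $h^p - h$ has a pole of order exactly $pa$; at points where $h$ is regular, so is $h^p - h$. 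Since $\mu X$ has no finite poles, every finite pole of $g$ has order a multiple of $p$. This is the precise counterpart of the fact that, in Lemma~\ref{lem:rat prod}, the controlling invariant is the order of a zero or pole modulo $n$.

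With this in hand I would argue as follows. Since $f \in \cR_{d}$, it has a finite pole, say at $\alpha \in \ovFp$, of some order $m$ with $p \nmid m$; let $\gamma_1 = \alpha, \gamma_2, \ldots, \gamma_b$ be all the finite poles of $f$, so that $b \le \deg f \le d$. Suppose $\vv = (v_1,\ldots,v_{2s})$ admits an index $k$ with
$$
\alpha - v_k \ne \gamma_t - v_i \qquad \text{for all } (t,i) \ne (1,k).
$$
Then $w = \alpha - v_k$ is a pole of exactly one of the $2s$ translates $f(X+v_j)$, namely the $k$-th, and of none of the others; consequently $L_{\vv,f}$ has a pole of order exactly $m$ at $w$, since there is no other singular term to cancel it (this is so regardless of the sign with which $f(X+v_k)$ enters the combination). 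As $p \nmid m$, the first paragraph shows $L_{\vv,f} \notin \cE$.

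Taking the contrapositive, if $L_{\vv,f} \in \cE$ then for every $k \in \{1,\ldots,2s\}$ there is a pair $(t,i) \ne (1,k)$ with $\alpha - v_k = \gamma_t - v_i$. Here $i \ne k$ necessarily, for $i = k$ would force $\alpha = \gamma_t$ and hence $t = 1$, contradicting $(t,i) \ne (1,k)$; thus $v_k - v_i = \alpha - \gamma_t$ lies in the fixed set $D = \{\alpha - \gamma_t : 1 \le t \le b\}$, whose size is at most $d$. So each index $k$ is joined to some $i \ne k$ by a relation $v_k - v_i \in D$, and exactly as at the end of the proof of Lemma~\ref{lem:rat prod} the $2s$ components of $\vv$ partition into at most $s$ groups within which the pairwise differences lie in a bounded set; fixing one coordinate per group and using $|D| = O(1)$ yields $\ll V^s$ admissible vectors.

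The only genuinely new ingredient compared with Lemma~\ref{lem:rat prod} is the characterization of $\cE$ through the divisibility of its finite pole orders, together with the bookkeeping guaranteeing that the isolated pole at $w = \alpha - v_k$ really survives in the linear combination $L_{\vv,f}$. I expect the main point requiring care to be this verification, and in particular the need to restrict attention to \emph{finite} poles, so as not to be misled by the pole of the $\mu X$ term at infinity; once the reduction to ``an isolated finite pole of order prime to $p$'' is made, the counting step is word-for-word the one in Lemma~\ref{lem:rat prod} and presents no further difficulty.
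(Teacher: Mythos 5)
Your proof is correct and follows essentially the same route as the paper's: isolate a finite pole $\alpha$ of $f$ of order prime to $p$, note that for any $\vv$ in which the translated pole $\alpha-v_k$ is not hit by any other translate the function $L_{\vv,f}$ has a pole of order prime to $p$ and hence lies outside $\cE$, and then count the exceptional vectors by the same grouping argument as in Lemma~\ref{lem:rat prod}. The only difference is one of detail: you actually prove the statement the paper dismisses as ``clear'' (that all \emph{finite} poles of functions in $\cE$ have order divisible by $p$) and you make explicit the restriction to finite poles forced by the $\beta X$ term, both of which match the paper's intent.
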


\begin{proof} Clearly, all functions from $\cE$ have a pole of   order that is a multiple of $p$.
It is also clear that if $f_{1},\ldots ,f_{n} \in \ovFp(X)$ are such that $f_{1}$ has a pole at $\alpha \in  \ovFp$ of order $u\ge 1$ and $f_{2},\ldots, f_{n}$ have no 
poles at $w$ then $f_{1}+\ldots +f_{n}$ has a pole at $\alpha$ of the same order $u$. 

This implies that if $L_{\vv,f}(X)  \in \cE$ then  for each choice of $1 \leq i\leq 2s$ there is some index $k\ne i$,  $1 \leq k\leq 2s$,  such that 
$$
  v_i-v_k \in \{\alpha- \gamma:~\gamma\ \text{is a pole of} \ f\}. 
  $$
 Indeed,  otherwise, that is, for other choices of $\(v_{1},\ldots ,v_{2s}\)\in \cV^{2s}$,  if $\alpha$ is a pole $f\in \ovFp(X)$ of order $\ord_\alpha f \not \equiv 0 
\pmod p$, then $f(X+v_i)$ has a pole $\beta = \alpha - v_i$ of the same order and which is not a pole 
of any other function involved in $L_{\vv,f}$. Hence 
$$
\ord_\beta L_{\vv,f} = \ord_\alpha f \not \equiv 0 \pmod p, 
$$
and therefore $ L_{\vv,f} \not \in \cE$.  
We see that the number of such choices  of  $\(v_{1},\ldots ,v_{2s}\)\in \cV^{2s}$ is at most 
$$
2^s \binom{2s}{s} d^s V^s \ll V^s
$$
and the result now follows. 
\end{proof}

\section{Character sums over linear subspaces}

We  need is~\cite[Lemma~3.2]{MSW} which follows instantly from 
the Weil bound for mixed character sums with rational functions
due to  Castro and Moreno~\cite{CaMo} (see also more general results 
of Fu and Wan~\cite[Theorem~5.6]{FW}) and the orthogonality of additive characters.

\begin{lemma}\label{lem:weil-subspace}  
Let $\chi$ and $\psi$ be a multiplicative and additive character, respectively, and 
let $g_1(X) , g_2(X)\in \F_{q^r}(X)$ be rational functions of degrees at most $d$. 
Assume that at least one of the following conditions holds
\begin{itemize}
\item[(i)] $\chi$ is nonprincipal of order $e$ and $g_1(X)  \in \cQ_{d,e}$,
\item[(ii)] $\psi$ is nonprincipal and $g_2(X)  \not \in \cE$. 
\end{itemize}
 Then for any affine subspace $\cL \subseteq  \F_{q^r}$ we have
$$
\sum_{\lambda \in \cL }\chi(g_1(\lambda))\psi\(g_2(\lambda)\) \ll  q^{r/2}.
$$
\end{lemma}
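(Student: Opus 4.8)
The plan is to reduce the sum over the affine subspace $\cL$ to complete sums over all of $\Fqr$ by Fourier-expanding the indicator of $\cL$ in additive characters, and then to quote the Weil-type bound of Castro and Moreno for each resulting complete mixed sum. This is precisely the ``completion'' step alluded to just before the statement, and the word ``instantly'' reflects that the $q^{r/2}$ from Weil survives the averaging without loss.

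Write $\cL=\lambda_0+W$ with $W$ an $\F_p$-subspace of $\Fqr$, let $\Tr=\Tr_{\Fqr/\F_p}$ be the absolute trace, and set $W^{\perp}=\{\mu\in\Fqr:\ \Tr(\mu w)=0 \ \text{for all}\ w\in W\}$, so that $\#W\cdot\#W^{\perp}=q^{r}$. Since the trace form is nondegenerate, $(W^{\perp})^{\perp}=W$, and orthogonality of additive characters gives
$$
\frac{1}{\#W^{\perp}}\sum_{\mu\in W^{\perp}}\ep\(\Tr(\mu(x-\lambda_0))\)=\1_{\cL}(x),\qquad x\in\Fqr.
$$
Inserting this identity and interchanging the order of summation, I would obtain
$$
\sum_{\lambda\in\cL}\chi(g_1(\lambda))\psi(g_2(\lambda))=\frac{1}{\#W^{\perp}}\sum_{\mu\in W^{\perp}}\ep\(-\Tr(\mu\lambda_0)\)\,T_{\mu},
$$
where
$$
T_{\mu}=\sum_{x\in\Fqr}\chi(g_1(x))\psi(g_2(x))\ep\(\Tr(\mu x)\).
$$

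The next step is to absorb the twist $\ep(\Tr(\mu x))$ into the additive character inside $T_{\mu}$. Writing $\psi(y)=\ep(\Tr(ay))$ for a fixed $a\in\Fqr$, I have $T_{\mu}=\sum_{x\in\Fqr}\chi(g_1(x))\psi(G_{\mu}(x))$, where $G_{\mu}(X)=g_2(X)+(\mu/a)X$ when $\psi$ is nonprincipal (so $a\neq0$); when $\psi$ is principal, $T_{\mu}=\sum_{x}\chi(g_1(x))\ep(\Tr(\mu x))$ has a purely linear additive part. I would then verify, uniformly in $\mu$, that the non-degeneracy hypotheses of the Castro--Moreno bound hold. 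Under (i) this is immediate, since the multiplicative datum $g_1\in\cQ_{d,e}$ is untouched by the additive twist: $\chi$ is a nonprincipal character of order $e$ and $g_1$ is not an $e$-th power, so $T_{\mu}\ll q^{r/2}$ for every $\mu$ regardless of the additive part. Under (ii) the crucial observation is that the exceptional set $\cE$ is stable under adding an $\ovFq$-linear term in $X$: its defining shape $\alpha(h(X)^p-h(X))+\beta X$ absorbs the summand $(\mu/a)X$ into $\beta X$. Hence $g_2\notin\cE$ forces $G_{\mu}\notin\cE$ for all $\mu$, the additive datum is non-degenerate, and $T_{\mu}\ll q^{r/2}$ again, this time irrespective of $\chi$ and $g_1$.

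Finally, since $G_{\mu}$ (respectively the linear additive part) has degree at most $\max(d,1)$, the implied constant in the Weil bound depends only on $d$, not on $r$, $q$, or $\mu$. Thus each of the $\#W^{\perp}$ summands is $O(q^{r/2})$, the prefactor $1/\#W^{\perp}$ cancels their number, and the claimed bound $\ll q^{r/2}$ follows. The one step demanding genuine care is the stability of $\cE$ under linear twists: this is exactly what makes the Castro--Moreno estimate uniform over the Fourier dual $W^{\perp}$, and it is where the free $\beta X$ term in the definition of $\cE$ is used. Everything else is the routine completion technique together with a black-box appeal to the cited mixed-character bound.
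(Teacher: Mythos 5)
Your proof is correct and follows exactly the route the paper indicates: the paper does not prove this lemma itself but cites~\cite[Lemma~3.2]{MSW}, remarking that it follows from the Castro--Moreno mixed Weil bound together with orthogonality of additive characters, which is precisely your completion argument over the trace-dual subspace $W^{\perp}$. Your treatment of the two nondegeneracy cases --- the multiplicative datum being untouched by the linear twist in case (i), and the stability of $\cE$ under adding $\ovFq$-linear terms in case (ii), which is exactly the purpose of the $\beta X$ term in the definition of $\cE$ --- is the correct way to make the word ``instantly'' rigorous.
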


The following result is our main technical tool. 
 
\begin{lemma}
\label{lem:double sum}
Let $\chi$ and $\psi$ be a multiplicative and additive character, respectively, and 
let $g_1(X) , g_2(X)\in \F_{q^r}(X)$ be rational functions of degrees at most $d$. 
Assume that at least one of the following conditions holds
\begin{itemize}
\item[(i)] $\chi$ is nonprincipal of order $e$ and $g_1(X)  \in \cQ_{d,e}$, 
\item[(ii)] $\psi$ is nonprincipal and $g_2(X)  \not \in \cE$. 
\end{itemize}
 Then for a linear space $\cL  \subseteq \F_{p^r}$ of dimension $t$ and  arbitrary  set $\cU    \subseteq \cL$ 
 and $\cV   \subseteq \F_{p^r}$ of cardinalities $U$ and $V$, respectively, 
 for each fixed integer $s \ge 1$ we have
$$
 \sum_{u\in \cU}  \left| \sum_{v \in \cV} \chi(g_{1}(\lambda+v))\psi(g_{2}(\lambda+v))\right|
 \ll    U^{1-1/(2s)} \(q^{t/(2s)}  V^{1/2}  +    q^{r/(4s)}   V\) . 
$$
\end{lemma}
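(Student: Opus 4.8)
The plan is to bound the double sum by a standard Hölder-plus-completion argument, converting the inner sum over $\cV$ into a character sum over the full affine subspace $\cL$ via Lemma~\ref{lem:weil-subspace}. First I would apply Hölder's inequality to the sum over $u \in \cU$ with exponents $2s$ and $2s/(2s-1)$, extracting a factor $U^{1-1/(2s)}$ and leaving
$$
\(\sum_{u\in \cU}\left|\sum_{v\in \cV}\chi(g_1(\lambda+v))\psi(g_2(\lambda+v))\right|^{2s}\)^{1/(2s)}.
$$
Since $\cU \subseteq \cL$, the $2s$-th power moment over $u\in\cU$ can be majorised by completing the sum to all of $\cL$, i.e.\ replacing $\sum_{u\in\cU}$ by $\sum_{u\in\cL}$, which only increases the nonnegative summand.

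Next I would expand the $2s$-th power. Writing the modulus as a product of $s$ copies of the inner sum and $s$ copies of its conjugate, the inner variables become a vector $\vv=(v_1,\ldots,v_{2s})\in\cV^{2s}$, and the summand over $u\in\cL$ becomes
$$
\sum_{u\in\cL}\chi\!\(\prod_{i=1}^{s}\frac{g_1(u+v_i)}{g_1(u+v_{s+i})}\)\psi\!\(\sum_{i=1}^{s}\(g_2(u+v_i)-g_2(u+v_{s+i})\)\),
$$
which is exactly a character sum over $\cL$ with the combined functions $P_{\vv,g_1}$ and $L_{\vv,g_2}$ of~\eqref{eq:Prod f} and~\eqref{eq:LinComb f}. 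The degrees of these are bounded in terms of $d$ and $s$ only.

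Now I would split the $\vv\in\cV^{2s}$ according to whether the pair $(P_{\vv,g_1},L_{\vv,g_2})$ satisfies the hypotheses of Lemma~\ref{lem:weil-subspace}. In case (i), the ``good'' vectors are those with $P_{\vv,g_1}\in\cQ_{2ds,e}$; by Lemma~\ref{lem:rat prod} the ``bad'' set has size $\ll V^s$. In case (ii), the good vectors are those with $L_{\vv,g_2}\notin\cE$; by Lemma~\ref{lem:rat lincomb} the bad set again has size $\ll V^s$. For a good $\vv$, Lemma~\ref{lem:weil-subspace} (applied with $\cL$ as the affine subspace, noting $\dim\cL=t$ so the Weil bound reads $\ll q^{t/2}$) gives an inner sum $\ll q^{t/2}$; there are at most $V^{2s}$ such vectors. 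For a bad $\vv$, I bound the inner sum trivially by $\#\cL=q^t$; there are $\ll V^s$ such vectors. Hence the inner $2s$-th moment is
$$
\ll V^{2s}q^{t/2}+V^{s}q^{t}.
$$
Taking the $2s$-th root and using $(A+B)^{1/(2s)}\ll A^{1/(2s)}+B^{1/(2s)}$ yields $\ll Vq^{t/(4s)}+V^{1/2}q^{t/(2s)}$, and multiplying by $U^{1-1/(2s)}$ gives the claimed bound once I replace $q^{t/(4s)}$ by the stated $q^{r/(4s)}$ (legitimate since $t\le r$, so $q^{t/(4s)}\le q^{r/(4s)}$; the factor $q^{t/(2s)}$ stays as is). The main obstacle is verifying that the Weil bound survives uniformly: I must check that the degrees of $P_{\vv,g_1}$ and $L_{\vv,g_2}$ stay bounded by a constant depending only on $d$ and $s$ (so the implied constant in Lemma~\ref{lem:weil-subspace} is uniform in $r$), and that the nonprincipality/exceptionality conditions genuinely transfer to the combined functions—precisely what Lemmas~\ref{lem:rat prod} and~\ref{lem:rat lincomb} are designed to guarantee for all but $\ll V^s$ of the vectors.
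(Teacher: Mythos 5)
You follow exactly the paper's route: H{\"o}lder with exponent $2s$, completion of the moment from $\cU$ to the full subspace $\cL$ (using $\cU\subseteq\cL$ and nonnegativity), expansion of the $2s$-th power into the combined functions $P_{\vv,g_1}$ and $L_{\vv,g_2}$ of~\eqref{eq:Prod f} and~\eqref{eq:LinComb f}, the good/bad dichotomy on $\vv\in\cV^{2s}$ furnished by Lemma~\ref{lem:rat prod} in case (i) and Lemma~\ref{lem:rat lincomb} in case (ii), the trivial bound $q^t$ for the $\ll V^s$ bad vectors, and Lemma~\ref{lem:weil-subspace} for the good ones. However, one step is wrong as written: you assert that, since $\dim\cL=t$, Lemma~\ref{lem:weil-subspace} gives the inner sum $\ll q^{t/2}$. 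The lemma gives $\ll q^{r/2}$, independently of the dimension of the affine subspace: it is proved by completing the sum to all of $\F_{q^r}$ via orthogonality of additive characters and applying the Weil--Castro--Moreno bound to the resulting complete sums, so it can never do better than the square root of the \emph{full field} size, and it is trivial when $\#\cL\le q^{r/2}$. Square-root cancellation relative to the subspace size is in fact false in general: for example, with $t=1$, $\cL=\F_q\vartheta_1$, $g_1(X)=X$ and $\chi$ nonprincipal of order $e$ with $\gcd(e,q-1)=1$ (such $\chi$ exist for suitable $q$ and $r$), the restriction of $\chi$ to $\F_q^*$ is principal, so
$$
\sum_{a\in\F_q^*}\chi(a\vartheta_1)=(q-1)\chi(\vartheta_1),
$$
which has absolute value $q-1$, not $O(q^{1/2})$. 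Obtaining genuine cancellation over small subspaces is precisely the difficulty that this paper (and~\cite{MSW}) is organised around.

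Fortunately, the error is self-correcting in your write-up. Using the correct bound $q^{r/2}$ for the at most $V^{2s}$ good vectors and $q^t$ for the $\ll V^s$ bad ones, the completed moment is $\ll V^{2s}q^{r/2}+V^sq^t$, and taking the $2s$-th root gives $\ll Vq^{r/(4s)}+V^{1/2}q^{t/(2s)}$ directly, with no need for your final relaxation $q^{t/(4s)}\le q^{r/(4s)}$; multiplying by the H{\"o}lder factor $U^{1-1/(2s)}$ then yields the stated bound. With this single substitution your argument coincides with the paper's proof.
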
   

\begin{proof} Let
$$
S = \sum_{\lambda \in \cU}  \left| \sum_{v \in \cV} \chi(g_{1}(u+v))\psi(g_{2}(u+v))\right|.
$$
Applying the H{\"o}lder inequality, we derive 
\begin{align*}
S^{2s} &  \le U^{2s-1}\sum_{\lambda \in \cU}  \left| \sum_{v \in \cV} \chi(g_{1}(u+v))\psi(g_{2}(u+v))\right|^{2s}\\
&  \le U^{2s-1}\sum_{\lambda \in \cL}  \left| \sum_{v \in \cV} \chi(g_{1}(\lambda+v))\psi(g_{2}(\lambda+v))\right|^{2s}\\
& = U^{2s-1} \sum_{\lambda\in \cL} \, \sum_{\vv= \(v_{1},\ldots ,v_{2s}\)\in \cV^{2s}} 
\chi\(P_{\vv,f}(\lambda)\)\psi\(L_{\vv,f}(\lambda)\)\\
& = U^{2s-1} \sum_{\vv= \(v_{1},\ldots ,v_{2s}\)\in \cV^{2s}}\,  \sum_{\lambda \in \cL} 
\chi\(P_{\vv,f}(\lambda)\)\psi\(L_{\vv,f}(\lambda)\),
\end{align*} 
where $P_{\vv,f}(X)$ and $L_{\vv,f}(X)$ are defined by~\eqref{eq:Prod f} and~\eqref{eq:LinComb f}, 
respectively.

We now see that if at least one of the above conditions~(i) or~(ii) holds that by either 
Lemma~\ref{lem:rat prod} or Lemma~\ref{lem:rat lincomb} we can apply 
Lemma~\ref{lem:weil-subspace} to the inner sum over the linear space $\cL$ for all but $O\(V^s\)$
vectors $\vv \in \cV$, for which we estimate the inner sum trivially as $q^t$. 
Hence 
$$
S^{2s} \ll  U^{2s-1} \(q^t V^{s} + q^{r/2} V^{2s}\), 
$$
and the result follows. 
\end{proof}

\section{Proof of Theorems~\ref{thm:gen bound} and~\ref{thm:poly bound}}

We recall the definition~\eqref{eq:Set SrA}
of  the set $\SrA$, and  some real  positive parameter  $\tau \in [0,1] $ and 
set $t = \fl{\tau r}$, to be chosen later, 
we define the sets
\begin{align*}
&\cU= \left \{a_1\vartheta_1+ \ldots +a_t \vartheta_t:~a_1, \ldots, a_t \in \cA\right\},\\
& \cL = \left \{a_1\vartheta_1+ \ldots +a_t \vartheta_t:~a_1, \ldots, a_t \in \F_{q}\right\},\\
&\cV= \left \{a_{t+1}\vartheta_{t+1}+ \ldots +a_r \vartheta_r:~a_{t+1}, \ldots, a_r \in \cA\right\}, 
\end{align*} 
of cardinalities
$$
 U = q^{\rho t} \ll q^{\rho \tau r}, \qquad 
 L = q^{t} \ll q^{\tau r},\qquad V = q^{\rho (r- t)} \ll q^{\rho(1- \tau) r},
$$
respectively. We can now write 
\begin{align*}
S_{r}(\cA;\chi,\psi;f_1,f_2)& = \sum_{\omega \in \SrA}\chi\(f_1(\omega)\)\psi\(f_2(\omega)\)\\
&= \sum_{u \in \cU}\sum_{v \in \cV}\chi\(f_1(u+v)\)\psi\(f_2(u+v)\).
\end{align*}
Thus
\begin{align*}
\left|S_{r}(\cA;\chi,\psi;f_1,f_2)\right| \leq \sum_{u \in \cU}\left|\sum_{v \in \cV}\chi\(f_1(u+v)\)\psi\(f_2(u+v)\)\right|.
\end{align*}

Under the conditions of Theorem~\ref{thm:gen bound}, 
by Lemma~\ref{lem:double sum} and the above cardinality estimates we have
\begin{align*}
S_{r}(\cA;\chi&,\psi;f_1,f_2) \ll U^{1-1/(2s)} \(q^{t/(2s)}  V^{1/2}  +    q^{r/(4s)}   V\)\\
& \ll q^{\rho \tau r(1 - 1/(2s))+\tau r/(2s)+\rho(1-\tau)r/2} + q^{\rho \tau r(1 - 1/(2s))+r/(4s)+\rho(1-\tau)r}\\
& = q^{r(\tau\rho(1 - 1/(2s)) +\tau(1/(2s)- \rho/2) + \rho/2)}+q^{r(\tau\rho(1 - 1/(2s)) +\rho(1-\tau) + 1/(4s))}.
\end{align*}  
Hence we have
 \begin{equation}\label{eq:Rough}
S_{r}(\cA;\chi,\psi;f_1,f_2)  \ll q^{r\Delta_{s,\rho}(\tau)}
\end{equation}
where 
$$
\Delta_{s,\rho}(\tau)=  \(1-  \frac{1}{2s}\)\rho \tau + \max\left\{ \frac{\tau}{2s} + \frac{\rho(1- \tau)}{2}, 
 \frac{1}{4s} +  \rho(1- \tau) \right \}.
$$
To minimise $\Delta_s(\tau)$ we choose 
$$
\tau_0 = \frac{2s \rho+1}{2(s\rho +1)}  
$$
to equalise the terms inside of the above maximum and compute 
\begin{align*}
\Delta_{s,\rho}(\tau_0) & =   \(1-  \frac{1}{2s}\)\rho \tau_0 +  
 \frac{1}{4s} +  \rho(1- \tau_0)\\
& = \rho + \frac{1}{4s} -   \frac{1}{2s}\rho \tau_0 = \rho  - \frac{1}{4s} (2 \rho \tau_0 -1)
= \rho - \kappa_s(\rho) , 
\end{align*} 
where $\kappa_s(\rho) $ is given by~\eqref{eq:kappa-s}, which together with~\eqref{eq:Rough} concludes the proof 
of Theorem~\ref{thm:gen bound}. 

 To prove Theorem~\ref{thm:poly bound}, we note that for $s=1$,  the above argument still applies for $f_2 \in \cP_d$, 
and the result follows.

\section*{Acknowledgements}

The authors are grateful to the referee for the careful reading of the manuscript and many useful comments.

During the preparation of this work, I.S. was supported in part by the Australian Research Council Grants~DP230100530 and  DP230100534.


\begin{thebibliography}{www}


\bibitem{BaCoSh} W. Banks, A. Conflitti and I. E. Shparlinski,
`Character sums over integers with restricted
$g$-ary digits',   {\it Illinois J. Math.\/},
{\bf 46} (2002), 819--836.

\bibitem{CaMo} F. N. Castro and C. J. Moreno, 
`Mixed exponential sums over finite fields', 
{\it Proc. Amer. Math. Soc.\/}, {\bf 128}  (2000),  2529--2537.

\bibitem{FW} L. Fu and D. Wan, 
`A class of incomplete character sums',
{\it Quart. J. Math.\/},  {\bf 65} (2014), 1195--1211. 

\bibitem{MSW} L. M{\'e}rai, I. E. Shparlinski and A. Winterhof, `Character sums over sparse elements of finite fields', 
{\it Preprint\/}, 2022, (available from  \url{https://arxiv.org/abs/2211.08452}).

 \bibitem{OstShp} A. Ostafe  and I. E. Shparlinski, `Multiplicative character sums and products of sparse integers in residue classes', {\it Period. Math. Hungar.\/},
{\bf 64} (2012),  247--255.


\end{thebibliography}
\end{document}